\numberwithin{equation}{section}
\newtheorem{Thm}{Theorem}[section]
\newtheorem{Prop}[Thm]{Proposition}
\newtheorem{Lem}[Thm]{Lemma}
\theoremstyle{definition}
\newtheorem{Expl}[Thm]{Example}
\newcommand{\R}{\mathbb{R}}
\newcommand{\E}[1]{{\rm E}(#1)}
\newcommand{\e}{{\rm e}}
\newcommand{\del}{\delta}
\newcommand{\diam}{\operatorname{diam}}
\newcommand{\dis}{\operatorname{dis}}
\newcommand{\eps}{\varepsilon}
\newcommand{\es}{\emptyset}
\newcommand{\GH}{\text{\rm GH}}
\renewcommand{\H}{\text{\rm H}}
\renewcommand{\rho}{\varrho}
\newcommand{\Sig}{\Sigma}
\newcommand{\sm}{\setminus}
\newcommand{\sub}{\subset}
\newcommand{\defl}{\mathrel{\mathop:}=}
\title{Metric stability of trees and tight spans}
\author{Urs Lang}
\address{Department of Mathematics, ETH Zurich, 8092 Zurich, Switzerland}
\email{urs.lang@math.ethz.ch}
\author{Ma\"el Pav\' on}
\address{Department of Mathematics, ETH Zurich, 8092 Zurich, Switzerland}
\email{mael.pavon@math.ethz.ch}
\author{Roger Z\"{u}st}
\address{D\'{e}partement de math\'{e}matiques, Universit\'{e} de Fribourg, 1700 Fribourg, Switzerland}
\email{roger.zuest@unifr.ch}
\thanks{Research supported by the Swiss National Science Foundation.}
\date{March 27, 2013}
\begin{document}

\begin{abstract}
We prove optimal extension results for roughly isometric relations 
between metric ($\R$-)trees and injective metric spaces. This yields sharp 
stability estimates, in terms of the Gromov--Hausdorff (GH) distance, 
for certain metric spanning constructions: The GH distance of two
metric trees spanned by some subsets is smaller than or equal to 
the GH~distance of these sets. The GH~distance of the injective hulls,
or tight spans, of two metric spaces is at most twice the GH~distance between 
themselves.
\end{abstract}

\maketitle


\section{Introduction}

The main purpose of this note is to provide an optimal stability result,
in terms of the Gromov--Hausdorff distance, for Isbell's~\cite{Isb}
injective hull construction $X \mapsto \E X$ for metric spaces. 
Roughly speaking, $\E X$ is a smallest injective metric space containing 
an isometric copy of~$X$ (all relevant definitions will be reviewed later 
in this paper). Here, a metric space $Y$ is called {\em injective} 
if for any isometric embedding $i \colon A \to B$ of metric spaces and any 
$1$-Lipschitz (i.e., distance-nonincreasing) map $f \colon A \to Y$ there 
exists a $1$-Lipschitz extension $g \colon B \to Y$ of $f$, so that
$g \circ i = f$ (see \cite[Section~9]{AdaHS} for the general categorical 
notion). Examples of injective metric spaces include the real line $\R$, 
$l_\infty(I)$ for any index set $I$, and all complete metric trees;
however, by Isbell's result, this list is by far not exhaustive. 
Injective metric spaces are complete, geodesic, and contractible 
and share a number of remarkable properties. We refer 
to~\cite[Sections~2 and~3]{L} for a recent survey of injective metric spaces 
and hulls. 

An alternative, but equivalent, description of $\E X$ was given later by 
Dress~\cite{Dre}, who called it the tight span of $X$.
If $X$ is compact, then so is $\E X$, and if $X$ is finite, $\E X$ has 
the structure of a finite polyhedral complex of dimension at most $|X|/2$
with cells isometric to polytopes in some finite-dimensional $l_\infty$~space. 
If every quadruple of points in $X$ admits an isometric embedding into some 
metric tree, then so does $X$ itself, and $\E X$ provides the minimal 
complete such tree. This last property makes the 
injective hull/tight span construction a useful tool in phylogenetic 
analysis. Based on genomic differences an evolutionary distance 
between similar species is defined, and the construction
may then be applied to this finite metric space. Due to noise in 
the measurements or systematic errors, the process will rarely yield a tree, 
but (the $1$-skeleton of) the resulting polyhedral complex may still give 
a good indication on the phylogenetic tree one tries to reconstruct 
(compare~\cite{DreHM, DreMT} and the references there).

In view of these applications, and also from a purely geometric 
perspective, it is interesting to know how strongly the injective hull is
affected by small changes of the underlying metric space.
The dissimilarity of two metric spaces $A,B$ 
is conveniently measured by their Gromov--Hausdorff distance $d_\GH(A,B)$. 
Moezzi~\cite[Theorem~1.55]{Moe} observed that $d_{\GH}(\E A,\E B)$ is not 
larger than eight times $d_{\GH}(A,B)$. Here it is now shown that in fact
\[ 
d_{\GH}(\E A,\E B) \le 2\,d_{\GH}(A,B),
\]
and an example is constructed to demonstrate that the factor two is optimal
(see Section~3). Furthermore, we prove that if both $\E A$ and $\E B$ are 
metric trees (in the most general sense of $\R$-trees), then
\[ 
d_{\GH}(\E A,\E B) \le d_{\GH}(A,B), 
\]
without a factor two. In particular, this implies that if $X,Y$ are two 
finite simplicial metric trees with sets of terminal vertices $A,B$,
respectively, then $d_{\GH}(X,Y) \le d_{\GH}(A,B)$. This result (which we have 
not been able to find in the literature) is not as obvious 
as it may appear at first glance. A complication arises from the fact that 
for the respective vertex sets $V_X,V_Y$, 
it is not true in general that $d_{\GH}(V_X,V_Y) \le d_{\GH}(A,B)$,
not even for combinatorially equivalent binary trees.
For instance, consider the two trees $X,Y$ depicted below, with the indicated 
edge lengths. 
\bigskip\newline
\setlength{\unitlength}{6mm}
\thicklines
\hspace*{\fill}
\begin{picture}(4.4,4)(-2.2,-2)
\put(-1,0){\line(1,0){2}}\put(-1,0){\circle*{.2}}\put(1,0){\circle*{.2}}
\put(1,0){\line(3,4){1.2}}\put(2.2,1.6){\circle*{.2}}
\put(1,0){\line(3,-4){1.2}}\put(2.2,-1.6){\circle*{.2}}
\put(-1,0){\line(-3,4){1.2}}\put(-2.2,1.6){\circle*{.2}}
\put(-1,0){\line(-3,-4){1.2}}\put(-2.2,-1.6){\circle*{.2}}
\put(0,0.2){\makebox(0,0)[b]{2}}
\put(1.4,0.8){\makebox(0,0)[br]{2}}
\put(1.4,-0.8){\makebox(0,0)[tr]{2}}
\put(-1.4,0.8){\makebox(0,0)[bl]{2}}
\put(-1.4,-0.8){\makebox(0,0)[tl]{2}}
\put(2.4,1.6){\makebox(0,0)[bl]{$a_4$}}
\put(2.4,-1.6){\makebox(0,0)[tl]{$a_3$}}
\put(-2.4,1.6){\makebox(0,0)[br]{$a_2$}}
\put(-2.4,-1.6){\makebox(0,0)[tr]{$a_1$}}
\put(0,-1.6){\makebox(0,0)[t]{$X$}}
\end{picture}
\hfill
\begin{picture}(7.2,4)(-3.6,-2)
\put(-3,0){\line(1,0){6}}\put(-3,0){\circle*{.2}}\put(3,0){\circle*{.2}}
\put(3,0){\line(3,4){0.6}}\put(3.6,0.8){\circle*{.2}}
\put(3,0){\line(3,-4){0.6}}\put(3.6,-0.8){\circle*{.2}}
\put(-3,0){\line(-3,4){0.6}}\put(-3.6,0.8){\circle*{.2}}
\put(-3,0){\line(-3,-4){0.6}}\put(-3.6,-0.8){\circle*{.2}}
\put(0,0.2){\makebox(0,0)[b]{6}}
\put(3.1,0.4){\makebox(0,0)[br]{1}}
\put(3.1,-0.4){\makebox(0,0)[tr]{1}}
\put(-3.1,0.4){\makebox(0,0)[bl]{1}}
\put(-3.1,-0.4){\makebox(0,0)[tl]{1}}
\put(3.8,0.8){\makebox(0,0)[bl]{$b_4$}}
\put(3.8,-0.8){\makebox(0,0)[tl]{$b_3$}}
\put(-3.8,0.8){\makebox(0,0)[br]{$b_2$}}
\put(-3.8,-0.8){\makebox(0,0)[tr]{$b_1$}}
\put(0,-0.8){\makebox(0,0)[t]{$Y$}}
\end{picture}
\hspace*{\fill}
\bigskip\newline
The correspondence between $A \defl \{a_1,\dots,a_4\}$ and 
$B \defl \{b_1,\dots,b_4\}$ that relates $a_i$ to $b_i$ distorts all 
distances by an additive error of two. Since the diameters of $A$ and $B$ 
also differ by two, no correspondence (i.e., left- and right-total relation)
between $A$ and $B$ has (maximal) distortion less than two. The 
Gromov--Hausdorff distance equals one half this minimal number (see Section~3), 
so $d_\GH(A,B) = 1$. Similar considerations show that $d_\GH(V_X,V_Y) = 2$.
Yet, $d_\GH(X,Y) = 1$. For the proof, points in $X$ and $Y$ need to be related 
in a non-canonical way. 


\section{Extension of roughly isometric relations}

As just indicated, the Gromov--Hausdorff distance may be characterized 
in terms of the additive distortion of relations between the two given 
metric spaces. Therefore, in this section, we begin by studying the 
possibility of extending relations without increasing the distortion.

Let $X,Y$ be two metric spaces. We write $|xx'|$ for the distance 
of two points $x,x' \in X$ and, likewise, $|yy'|$ for the distance of
$y,y' \in Y$.
Given a relation $R$ between $X$ and $Y$, i.e., a subset of $X \times Y$,
the {\em distortion} of $R$ is defined as the (possibly infinite) number
\[
\dis(R) \defl \sup\bigl\{ \bigl| |xx'| - |yy'| \bigr| : 
(x,y), (x',y') \in R \bigr\}.
\]
In case $R$ is given by a map $f \colon X \to Y$, we write $\dis(f)$ 
for $\dis(R)$. If $\dis(f) \le \eps$ for some $\eps \ge 0$, then $f$ is 
called {\em $\eps$-roughly isometric}. This means that 
\[
|xx'| - \eps \le |f(x)f(x')| \le |xx'| + \eps
\]
for every pair of points $x,x' \in X$. See~\cite[Chapter~7]{BurBI} 
and~\cite[Chapter~7]{BuyS} for this terminology. We denote by 
$\pi_X \colon X \times Y \to X$ and $\pi_Y \colon X \times Y \to Y$ 
the canonical projections. For a set $A \sub X$, we say that 
$A$ \emph{spans}~$X$ if, for every pair $(x,x') \in X \times X$, 
\[
|xx'| = \sup_{a \in A} \bigl( |xa| - |x'a| \bigr);
\]
equivalently, for all $\eps > 0$ there is an $a_\eps \in A$ such that 
$|xx'| + |x'a_\eps| \le |xa_\eps| + \eps$. The definition is motivated by the 
fact that the injective hull of a metric space $A$ may be characterized as 
an injective metric extension $X \supset A$ spanned by $A$, 
see Proposition~\ref{Prop:inj-hull-spanned} below.
For a constant $\alpha \ge 0$, a set $S \sub X$ is 
called an {\em $\alpha$-net} in $X$ if for every $x \in X$ there exists a 
$z \in S$ such that $|xz| \le \alpha$.

\begin{Prop}\label{Prop:injective-ext}
Suppose that $X,Y$ are two injective metric spaces.
If $R \sub X \times Y$ is a set with $\alpha \defl \dis(R)/2 < \infty$ and
the property that $\pi_X(R)$ spans $X$, there exists an extension 
$R \sub \bar R \sub X \times Y$ such that $\pi_X(\bar R)$ is an $\alpha$-net 
in $X$ and $\dis(\bar R) = \dis(R)$.
\end{Prop}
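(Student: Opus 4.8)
The plan is to globalize the construction by Zorn's lemma and reduce the whole statement to a one-point extension. Consider the family of all relations $R'$ with $R\sub R'\sub X\times Y$ and $\dis(R')=\dis(R)=2\alpha$, ordered by inclusion. Since the distortion is a supremum over \emph{pairs} of elements and any two elements of a chain lie in a common member of that chain, the union of a chain again has distortion $2\alpha$; Zorn's lemma yields a maximal element $\bar R$. Then $\dis(\bar R)=\dis(R)$, and since spanning clearly passes to supersets, $\pi_X(\bar R)\supseteq\pi_X(R)$ still spans $X$. It remains to show that maximality forces $\pi_X(\bar R)$ to be an $\alpha$-net. Assume not: there is an $x_0\in X$ with $|x_0 z|>\alpha$ for every $z\in\pi_X(\bar R)$. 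I would then look for a single $y_0\in Y$ with
\[
|x_0 x'|-2\alpha\le|y_0 y'|\le|x_0 x'|+2\alpha\qquad\text{for all }(x',y')\in\bar R,
\]
for then $\bar R\cup\{(x_0,y_0)\}$ has distortion $2\alpha$, contradicting maximality.

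I would obtain the two inequalities by different tools. The \emph{upper} bounds say that $y_0$ lies in the intersection of the closed balls $\bar B(y',|x_0 x'|+2\alpha)$, $(x',y')\in\bar R$. As $Y$ is injective, hence hyperconvex, this intersection is nonempty once the pairwise compatibility $|y'y''|\le(|x_0 x'|+2\alpha)+(|x_0 x''|+2\alpha)$ holds; and indeed $|y'y''|\le|x'x''|+2\alpha\le|x_0 x'|+|x_0 x''|+2\alpha$ by the distortion bound and the triangle inequality in $X$. The \emph{lower} bounds are where spanning enters. Fixing a target $(x'',y'')\in\bar R$ and any $a\in\pi_X(R)$ with a partner $(a,y_a)\in\bar R$, the triangle inequality in $Y$ and the distortion bound give $|y_0 y''|\ge|y'' y_a|-|y_0 y_a|\ge(|x''a|-2\alpha)-|y_0 y_a|$; taking the supremum over $a$ and invoking $\sup_a\bigl(|x''a|-|x_0 a|\bigr)=|x_0 x''|$ from the spanning of $\pi_X(R)$ would produce a bound of the desired shape $|y_0 y''|\ge|x_0 x''|-2\alpha$ --- \emph{provided} the distances $|y_0 y_a|$ to the partners of the far points do not exceed $|x_0 a|$.

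This proviso is exactly the main obstacle, and it concerns the sharp constant rather than mere existence. An arbitrary point of the ball intersection above only satisfies $|y_0 y_a|\le|x_0 a|+2\alpha$, or $|x_0 a|+\alpha$ after shrinking the radii to the smallest uniformly feasible value, and feeding this into the previous display degrades the lower bound to $|x_0 x''|-4\alpha$, respectively $-3\alpha$. Shrinking all radii uniformly to $|x_0 a|$ is infeasible: a pair $a,a'$ almost antipodal through $x_0$ and realizing the maximal distortion violates the hyperconvexity compatibility by exactly $2\alpha$. The way out is that, by the spanning property, the profile $z\mapsto|x_0 z|$ is an extremal (Kat\v{e}tov-type) function: for each $z$ there are $a\in\pi_X(R)$ with $|x_0 z|+|x_0 a|$ arbitrarily close to $|za|$, i.e.\ $a$ nearly antipodal to $z$ through $x_0$. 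I would therefore \emph{not} take $y_0$ uniformly but choose it non-uniformly inside the admissible intersection (which is itself hyperconvex), so that the upper estimate to precisely these relevant far partners $y_a$ is tight, $|y_0 y_a|\le|x_0 a|$, while the remaining balls keep radius up to $|x_0 z|+2\alpha$; establishing that this mixed family of balls still has a common point is the technical heart. A clean source of the upper bound is the isometric amalgam $W=(X\sqcup Y)/\!\sim$, in which the pairs of $\bar R$ are glued at distance $\alpha$ and a $1$-Lipschitz retraction $W\to Y$ (furnished by the injectivity of $Y$) sends $x_0$ to a point with $|y_0 y'|\le|x_0 x'|+\alpha$; but the same extremal, spanning-driven choice is needed to pin the far distances and thereby close the lower bounds at the sharp value $2\alpha$. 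Verifying that such a $y_0$ meets all the annular constraints simultaneously is the step I expect to be hardest.
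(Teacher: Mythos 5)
Your reduction via Zorn's lemma is sound and equivalent to the paper's (which phrases the statement as a one-pair extension lemma followed by Zorn), and the first half of your construction matches the paper's: hyperconvexity of $Y$ applied to the balls of radius $|x_0x'|+\alpha$ around the $y'$ (your amalgam glued at distance $\alpha$ gives the same radii) produces a $y_0$ with $|y_0y'|\le|x_0x'|+\alpha$, and the spanning property then gives the lower bound $|y_0y'|\ge|x_0x'|-3\alpha$, exactly as you compute. The genuine gap is in what you call the technical heart, and your instinct that this is the hard step is right --- but the resolution is not a cleverer choice of $y_0$. As you yourself observe, forcing $|y_0y_a|\le|x_0a|$ for near-antipodal partners violates the hyperconvexity compatibility by $2\alpha$, and the spanning hypothesis \emph{guarantees} that near-antipodal pairs through $x_0$ exist; so no choice of $y_0$ will satisfy $|y_0y'|\ge|x_0x'|-2\alpha$ for the \emph{exact} point $x_0$. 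That is precisely why the proposition only claims an $\alpha$-net rather than $\pi_X(\bar R)=X$ (compare Proposition~\ref{Prop:trees-ext}, where surjectivity onto $X$ requires trees and the stronger ``strictly spans'' hypothesis).

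The missing idea is a second application of hyperconvexity, this time in $X$ --- note that your argument never uses that $X$ is injective. Keep the $y_0$ you already have, satisfying $|x\bar x|-3\alpha\le|yy_0|\le|x\bar x|+\alpha$ for all $(x,y)\in\bar R$ (writing $\bar x$ for the point you called $x_0$), and then seek a \emph{new} point $x_0\in X$ in the intersection of the balls of radius $r(x,y)\defl|yy_0|+2\alpha$ around the $x$ with $(x,y)\in\bar R$, together with the ball of radius $\alpha$ around $\bar x$. The compatibility conditions are exactly $(|yy_0|+2\alpha)+\alpha\ge|x\bar x|$, which is the lower bound you proved, and $(|yy_0|+2\alpha)+(|y'y_0|+2\alpha)\ge|yy'|+4\alpha\ge|xx'|+2\alpha\ge|xx'|$. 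Hyperconvexity of $X$ then yields $x_0$ with $|\bar xx_0|\le\alpha$ and $|xx_0|\le|yy_0|+2\alpha$, while $|yy_0|\le|x\bar x|+\alpha\le|xx_0|+|\bar xx_0|+\alpha\le|xx_0|+2\alpha$; so the pair $(x_0,y_0)$ --- not $(\bar x,y_0)$ --- can be adjoined without increasing the distortion. Since $x_0$ lies within $\alpha$ of $\bar x$, it still witnesses the $\alpha$-net property, and in your framing it is a genuinely new element of $\pi_X(\bar R)$ (because you assumed every point of $\pi_X(\bar R)$ is at distance greater than $\alpha$ from $\bar x$), so maximality is contradicted as desired.
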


In particular, every $\eps$-roughly isometric map $f \colon A \to Y$ 
defined on a set $A \sub X$ that spans $X$ admits an $\eps$-roughly
isometric extension $\bar f \colon S \to Y$ to some $\eps/2$-net 
$S$ in $X$ and, hence, also a $2\eps$-roughly isometric extension
$\hat f \colon X \to Y$. Below we shall use the simple fact 
that every injective metric space $Y$ is {\em hyperconvex}~\cite{AroP} 
(the converse is true as well). This means that for every family 
$\{(y_i,r_i)\}_{i \in I}$ in $Y \times \R$ with the property that 
$r_i + r_j \ge |y_i y_j|$ for all pairs of indices $i,j \in I$, 
there is a point $y \in Y$ such that $|y y_i| \le r_i$ for all $i \in I$.

\begin{proof}
It suffices to show that for every set $R \sub X \times Y$ with 
$\alpha \defl \dis(R)/2 < \infty$ and the property that $\pi_X(R)$ 
spans $X$ and for every $\bar x \in X$ there exists a pair 
$(x_0,y_0) \in X \times Y$ such that $|\bar x x_0| \le \alpha$ and
\[
\dis\bigl( R \cup \{(x_0,y_0)\} \bigr) = \dis(R).
\] 
The general result then follows by an application of Zorn's lemma.

Let such $R$ and $\bar x$ be given, and put $\alpha \defl \dis(R)/2$. 
For all $(x,y),(x',y') \in R$,
\[
\bigl| |x x'| - |y y'| \bigr| \le 2\alpha
\]
and $(|x \bar x| + \alpha) + (|x' \bar x| + \alpha) \ge |x x'| + 2\alpha 
\ge |y y'|$. Hence, since $Y$ is hyperconvex, there is a 
point $y_0 \in Y$ such that for all $(x,y) \in R$,
\[
|y y_0| \le |x \bar x| + \alpha.
\]
Furthermore, since $\pi_X(R)$ spans $X$, for every $(x,y) \in R$ and 
$\eps > 0$ there exists $(x_\eps,y_\eps) \in R$ such that 
$|x \bar x| + |\bar x x_\eps| \le |x x_\eps| + \eps$ and, hence,   
\[
|y y_0| \ge |y y_\eps| - |y_0 y_\eps|
\ge (|x x_\eps| - 2\alpha) - (|\bar x x_\eps| + \alpha)
\ge |x \bar x| - 3 \alpha - \eps.
\]
Since this holds for all $\eps > 0$, it follows that 
$|y y_0| \ge |x \bar x| - 3 \alpha$.
For every $(x,y) \in R$, put $r(x,y) \defl |y y_0| + 2\alpha$, and set
$r(\bar x) \defl \alpha$. 
We have $r(x,y) + r(\bar x) = |y y_0| + 3\alpha \ge |x \bar x|$
and $r(x,y) + r(x',y') \ge |y y'| + 4\alpha \ge |x x'| + 2\alpha \ge |x x'|$,
for all $(x,y),(x',y') \in R$.
Thus, since $X$ is hyperconvex, there exists a point 
$x_0 \in X$ such that  
\[
|x x_0| \le r(x,y) = |y y_0| + 2\alpha
\]
and $|\bar x x_0| \le r(\bar x) = \alpha$ for all $(x,y) \in R$. Then also 
\[
|y y_0| \le |x \bar x| + \alpha \le |x x_0| + |\bar x x_0| + \alpha
\le |x x_0| + 2\alpha
\]
and so $\bigl| |x x_0| - |y y_0| \bigr| \le 2\alpha = \dis(R)$ for all 
$(x,y) \in R$.
\end{proof}

Now we focus on trees. A metric space $X$ is called {\em geodesic}
if for every pair of points $x,x' \in X$ there is a geodesic segment
$xx' \sub X$ connecting the two points, i.e., the image of an isometric 
embedding of the interval $[0,|xx'|]$ that sends $0$ to $x$ 
and $|xx'|$ to $x'$. By a {\em metric tree} $X$ we mean a geodesic metric 
space with the property that for any triple $(x,y,z)$ of points in $X$ and 
any geodesic segments $xy,xz,yz$ connecting them, $xy \sub xz \cup yz$.
Thus, geodesic triangles in $X$ are isometric to tripods, and geodesic 
segments are uniquely determined by their endpoints. For the next result
we need to sharpen the above assumption that $\pi_X(R)$ spans $X$. We say 
that a subset $A$ of a metric space $X$ {\em strictly spans}~$X$ if for 
every pair $(x,x') \in X \times X$ there exists an $a \in A$ such that 
$|xx'| + |x'a| = |xa|$.

\begin{Prop} \label{Prop:trees-ext}
Suppose that $X$ is a metric tree and $Y$ is an injective metric 
space. If $R \sub X \times Y$ is a set with the property that $\pi_X(R)$ 
strictly spans $X$, there exists an extension $R \sub \bar R \sub X \times Y$ 
such that $\pi_X(\bar R) = X$ and $\dis(\bar R) = \dis(R)$. 
\end{Prop}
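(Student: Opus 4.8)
The plan is to follow the reduction used for Proposition~\ref{Prop:injective-ext}: order the relations $\bar R$ with $R \sub \bar R$ and $\dis(\bar R) = \dis(R)$ by inclusion, note that unions of chains again have distortion $\dis(R)$, and apply Zorn's lemma. A maximal element $\bar R$ automatically satisfies $\pi_X(\bar R) = X$ once we prove the single-point claim: \emph{whenever $\pi_X(R)$ strictly spans $X$ and $\bar x \in X$, there is a $y_0 \in Y$ with $\dis\bigl(R \cup \{(\bar x, y_0)\}\bigr) = \dis(R)$.} (Strict spanning is inherited by any superset of $\pi_X(R)$, so the claim applies at every stage.) Writing $\del \defl \dis(R)$ and disposing of the trivial case $\del = \infty$ by $\bar R \defl R \cup (X \times \{y_0\})$, the claim amounts to finding $y_0$ with $\bigl||\bar x x| - |y_0 y|\bigr| \le \del$ for all $(x,y) \in R$, i.e.\ with the upper bounds $|y_0 y| \le |\bar x x| + \del$ and the lower bounds $|y_0 y| \ge |\bar x x| - \del$.

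First I would use strict spanning to pin $\bar x$ between two points of $\pi_X(R)$. Assuming $\bar x \notin \pi_X(R)$, pick any $a_1 \in \pi_X(R)$ and apply strict spanning to the pair $(a_1, \bar x)$ to get $a_2 \in \pi_X(R)$ with $|a_1 \bar x| + |\bar x a_2| = |a_1 a_2|$, so that $\bar x$ lies on the segment $a_1 a_2$. Set $s \defl |a_1 \bar x|$, $t \defl |\bar x a_2|$, and fix $b_1, b_2 \in Y$ with $(a_1, b_1), (a_2, b_2) \in R$. The tree axiom then gives the identity
\[
|\bar x x| = \max\bigl( |a_1 x| - s,\ |a_2 x| - t \bigr) \qquad \text{for all } x \in X,
\]
obtained by projecting $x$ onto the segment $a_1 a_2$. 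This reduces both requirements to the two witnesses: the upper bounds form the ball family $\{(y,\, |\bar x x| + \del)\}_{(x,y)\in R}$, while by the displayed identity the lower bounds are equivalent to $|y_0 y| \ge |a_i x| - |a_i \bar x| - \del$ for $i = 1,2$, each of which would follow, via $|y_0 y| \ge |b_i y| - |y_0 b_i|$ and $\dis(R) = \del$, from the single inequalities $|y_0 b_1| \le s$ and $|y_0 b_2| \le t$.

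The point $y_0$ I would then produce by hyperconvexity of $Y$. The family $\{(y,\, |\bar x x| + \del)\}$ is pairwise compatible, since $|yy'| \le |xx'| + \del \le |\bar x x| + |\bar x x'| + \del \le (|\bar x x| + \del) + (|\bar x x'| + \del)$, and its common point already realizes all upper bounds; adjoining the two balls $(b_1, s)$ and $(b_2, t)$ would, through the identity, realize the lower bounds as well. The main obstacle is exactly the compatibility of these last two balls: hyperconvexity demands $s + t \ge |b_1 b_2|$, yet $\dis(R) = \del$ only guarantees $|b_1 b_2| \le |a_1 a_2| + \del = s + t + \del$, so the tight witness balls can be infeasible when the images $b_1, b_2$ are stretched apart. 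Overcoming this is where strict spanning must be used in full: because $\pi_X(R)$ strictly spans on \emph{both} sides of $\bar x$, the two lower bounds cannot be saturated simultaneously. Concretely, I would argue that a violation $|y_0 y| < |\bar x x| - \del$ at a point $x$ with $|\bar x x| = |a_1 x| - s$ forces $|y_0 b_1| > s$ (since $|y_0 b_1| \ge |b_1 y| - |y_0 y| > (|a_1 x| - \del) - (|a_1 x| - s - \del) = s$), and then play this against the opposite witness through a four-point comparison in the tree to contradict $\dis(R) = \del$; equivalently, one selects $y_0$ inside the hyperconvex intersection of the upper balls so that it also clears the lower balls, the existence of such a point being the crux of the proof.
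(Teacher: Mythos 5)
Your reduction to a single-point extension via Zorn's lemma, the use of strict spanning to place $\bar x$ on a segment $a_1a_2$ with endpoints in $\pi_X(R)$, and the tree identity $|\bar x x| = \max(|a_1x|-s,\,|a_2x|-t)$ all match the skeleton of the paper's argument, and you have correctly located the crux: the two witness balls around $b_1$ and $b_2$ of radii $s$ and $t$ need not be compatible, since $|b_1b_2|$ can be as large as $s+t+\del$. But the proposal stops exactly there, and the escape route you sketch does not work. A violation $|y_0y| < |\bar x x| - \del$ is not contradictory --- the point $y_0$ obtained from the upper balls alone genuinely can fail some of the lower bounds, so there is no four-point comparison that refutes it; the violating pairs have to be repaired, not argued away. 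A second, structural problem is that you take the upper radii to be $|\bar x x| + \del$ with $\del = \dis(R)$ the full distortion; this spends the entire error budget at once and leaves no room to move $y_0$ afterwards, which is precisely the move the argument needs.

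The paper's resolution: set $\alpha = \dis(R)/2$ and first produce $y_0$ satisfying only $|yy_0| \le |x\bar x| + \alpha$ (half the budget). Let $S$ be the set of pairs with $|yy_0| < |x\bar x| - \alpha$; if $S = \es$, take $\bar y = y_0$. Otherwise pick $(x_1,y_1) \in S$ --- crucially from $S$, not an arbitrary element of $R$ as in your choice of $a_1$ --- and apply strict spanning to $(x_1,\bar x)$ to get $(x_2,y_2) \in R$ with $\bar x \in x_1x_2$. The key geometric step, which your proposal lacks, is that \emph{every} $(x,y) \in S$ has $\bar x \in xx_2$: if instead $\bar x \in xx_1$, then $|xx_1| = |x\bar x| + |\bar x x_1|$, while the distortion bound together with the strict inequalities defining $S$ at both $(x,y)$ and $(x_1,y_1)$ gives $|xx_1| \le |yy_0| + |y_0y_1| + 2\alpha < |x\bar x| + |\bar x x_1|$, a contradiction with the tripod structure. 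Hence all violating pairs lie on the same side of $\bar x$, and moving $y_0$ a distance $\alpha$ toward $y_2$ yields a point $\bar y$ that lifts every violated lower bound to $|x\bar x| - 2\alpha$ while the upper bounds degrade only to $|x\bar x| + 2\alpha = |x\bar x| + \dis(R)$. Without this one-sidedness lemma and the half-budget bookkeeping, the construction you outline cannot be completed.
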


In particular, every $\eps$-roughly isometric map $f \colon A \to Y$ 
defined on a set $A \sub X$ that strictly spans $X$ admits an $\eps$-roughly 
isometric extension $\bar f \colon X \to Y$. 

\begin{proof}
It suffices to show that for every set $R \sub X \times Y$ with 
$\dis(R) < \infty$ and the property that $\pi_X(R)$ spans $X$ and for every 
$\bar x \in X$ there exists a point $\bar y \in Y$ such that 
\[
\dis\bigl( R \cup \{(\bar x,\bar y)\} \bigr) = \dis(R).
\] 
As above, the general result then follows by an application of Zorn's lemma.

Thus let such $R$ and $\bar x$ be given. Put $\alpha \defl \dis(R)/2$. 
As in the proof of Proposition~\ref{Prop:injective-ext}, there exists a point 
$y_0 \in Y$ with the property that
\[
|y y_0| \le |x \bar x| + \alpha
\]
for all $(x,y) \in R$. 
Let $S$ be the set of all $(x,y) \in R$ with $|y y_0| < |x \bar x| - \alpha$.
If $S = \es$, then $\bigl| |x \bar x| - |y y_0| \bigr| \le \alpha \le \dis(R)$ 
for all $(x,y) \in R$; in particular, $\bar y \defl y_0$ has the desired 
property.
Suppose now that $S \ne \es$, and fix an arbitrary $(x_1,y_1) \in S$.
Since $\pi_X(R)$ strictly spans $X$, there exists a pair $(x_2,y_2) \in R$
such that $|x_1 \bar x| + |\bar x x_2| = |x_1 x_2|$. 
Now choose $\bar y \in Y$ so that $|\bar y y_0| \le \alpha$ and 
$|\bar y y_2| \le |y_0 y_2| - \alpha$. 
Note that $|y_0 y_2| \le |\bar x x_2| + \alpha$, so 
$|\bar y y_2| \le |\bar x x_2|$. For all $(x,y) \in R$,
\[
|y \bar y| \le |y y_0| + |\bar y y_0| \le |y y_0| + \alpha 
\le |x \bar x| + 2 \alpha.
\]
To estimate $|\bar y y|$ from below, note first that if $(x,y) \in R \sm S$, 
then 
\[
|y \bar y| \ge |y y_0| - |\bar y y_0| \ge |y y_0| - \alpha 
\ge |x \bar x| - 2 \alpha.
\]
Secondly, let $(x,y) \in S$. 
Consider the tripod $xx_1 \cup xx_2 \cup x_1x_2$, and note that 
$\bar x \in x_1x_2$. Since $(x,y),(x_1,y_1) \in S$, the strict inequality 
\[
|x x_1| \le |y y_1| + 2\alpha \le |y y_0| + |y_0 y_1| + 2\alpha 
< |x \bar x| + |\bar x x_1|
\]
holds, so $\bar x \not\in xx_1$ and therefore $\bar x \in xx_2$.
We conclude that
\[
|y \bar y| \ge |y y_2| - |\bar y y_2| \ge (|x x_2| - 2 \alpha) - |\bar x x_2| 
= |x \bar x| - 2 \alpha.
\]
This shows that $\bigl| |x \bar x| - |y \bar y| \bigr| \le 2\alpha = \dis(R)$ 
for all $(x,y) \in R$.
\end{proof}

The following example shows that Proposition~\ref{Prop:trees-ext} 
is no longer true in general if the word ``strictly'' is omitted.

\begin{Expl}
Let $X$ be the interval $[0,2]$, and put $x_0 := 0$ and $x_n := 2 - 2^{-n}$ for
all integers $n \ge 1$. The set $A \defl \{x_0,x_1,\dots\}$ spans $X$, but 
$A$ does not strictly span $X$, because $2 \not\in A$.
Let $Y$ be the simplicial metric tree with a single interior vertex~$y_1$ 
and the countably many edges $y_0y_1$ and $y_1y_n$ for $n = 2,3,\dots$, where
$|y_0y_1| = 2^{-1}$ and $|y_1y_n| = 2^{-1} - 2^{-n}$. Note that $Y$ is complete,
hence injective. The map $f \colon A \to Y$ defined by 
$f(x_n) \defl y_n$ for $n = 0,1,2,\dots$ is $1$-roughly isometric, as is 
easily checked. Since there is no pair of points at distance one in $Y$, 
$f$ does not admit a $1$-roughly isometric extension 
$\bar f \colon X \to Y$.
\end{Expl}

However, the following holds. 

\begin{Lem} \label{Lem:dense-subtree}
Let $X$ be a metric tree, and suppose that $A \sub X$ is a set that spans~$X$.
Then there exists a dense subtree $\Sig \sub X$ such that $A \sub \Sig$ and 
$A$ strictly spans~$\Sig$.  
\end{Lem}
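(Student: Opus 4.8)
The plan is to take $\Sig$ to be the geodesic convex hull of $A$, namely
\[
\Sig \defl \bigcup_{a,a' \in A} aa',
\]
the union of all geodesic segments with endpoints in $A$ (I write $pq$ for the unique segment joining $p$ and $q$, which exists since geodesics in a tree are unique). Clearly $A \sub \Sig$, so I must verify three things: that $\Sig$ is a subtree, i.e.\ geodesically convex and hence itself a metric tree; that $A$ strictly spans $\Sig$; and that $\Sig$ is dense in $X$. A point worth flagging at the outset is that I take the hull itself and \emph{not} its closure: the spanning hypothesis will force $\overline{\Sig}=X$, and $A$ need not strictly span all of $X$, as the preceding Example shows.

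For convexity I would first show $ub \sub \Sig$ whenever $u \in \Sig$ and $b \in A$. Writing $u \in aa'$ with $a,a' \in A$ and letting $m$ denote the median of $a,a',b$ (the common point of the three segments $aa'$, $ab$, $a'b$), the point $u$ lies in $am$ or in $ma'$. Since $m \in ab$ and $m \in a'b$, in either case $u$ lies on $ab$ or on $a'b$, whence $ub \sub ab \cup a'b \sub \Sig$. Feeding this back in, for arbitrary $u,v \in \Sig$ with $v \in bb'$ ($b,b' \in A$) one takes the median $m'$ of $u,b,b'$ and repeats the argument to obtain $uv \sub ub \cup ub' \sub \Sig$. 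Thus $\Sig$ is convex and therefore a metric tree.

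Strict spanning is the clean geometric point and follows at once from the tripod axiom. Given $(x,x') \in \Sig \times \Sig$, write $x' \in a_1 a_2$ with $a_1,a_2 \in A$. Applying the defining inclusion of a metric tree to the triple $(a_1,a_2,x)$ gives $a_1 a_2 \sub x a_1 \cup x a_2$, so $x'$ lies on $x a_1$ or on $x a_2$. The corresponding endpoint $a \in \{a_1,a_2\}$ then satisfies $|xx'| + |x'a| = |xa|$, which is exactly the strict spanning condition; this single argument also covers the degenerate cases $x = x'$ and $x' \in A$.

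The substantive step, and the only place the spanning hypothesis is used, is density. Fix $a_0 \in A$ (nonempty, since $A$ spans $X$) and let $z \in X$ and $\eps > 0$. Spanning applied to the pair $(a_0,z)$ produces $a \in A$ with
\[
|a_0 z| + |za| - |a_0 a| \le \eps.
\]
The median $m$ of $a_0,z,a$ lies on $a_0 a \sub \Sig$ and satisfies $|zm| = \tfrac12\bigl(|a_0 z| + |za| - |a_0 a|\bigr) \le \eps/2$, so $\operatorname{dist}(z,\Sig) \le \eps/2$. Letting $\eps \to 0$ yields $z \in \overline{\Sig}$, i.e.\ $\Sig$ is dense. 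I expect this density estimate — the idea of testing the spanning identity on the pair $(a_0,z)$ and reading off $|zm|$ as half the relevant Gromov product — to be the main obstacle, together with the easily overlooked point that one must work with the hull itself rather than its closure.
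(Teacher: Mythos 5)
Your proof is correct and follows essentially the same route as the paper: the same subtree $\Sig$ (the union of all segments with endpoints in $A$) and the same density argument, testing the spanning identity on the pair $(a_0,z)$ and bounding the distance from $z$ to the segment $a_0a$ by half the defect $|a_0z|+|za|-|a_0a|\le\eps$. The only difference is cosmetic: you verify convexity and strict spanning via medians and the tripod inclusion, where the paper instead notes that each segment $xx'$ with $x,x'\in\Sig$ is contained in a single segment $aa'$ with $a,a'\in A$ and declares this ``easily seen.''
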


\begin{proof}
Let $\Sig$ be the union of all geodesic segments with both
endpoints in $A$. Since $X$ is a metric tree, it is easily seen that 
for every pair of points $x,x' \in \Sig$ the geodesic segment $xx'$ in $X$
is part of a geodesic segment $aa'$ with $a,a' \in A$. In particular, 
$\Sig$ is a geodesic subspace of $X$, hence a metric tree, and $A$ strictly 
spans $\Sig$. It remains to show that $\Sig$ is dense in $X$.
Let $x \in X$. Fix an arbitrary $a \in A$. Since $A$ spans $X$, for every
$\eps > 0$ there is an $a_\eps \in A$ so that $|ax| + |xa_\eps| \le |a a_{\eps}| 
+ \eps$. Consider the geodesic segment $aa_\eps$. Let $x_\eps$ be the 
point on $aa_\eps$ nearest to $x$. Then
\[
2|xx_\eps| = |ax| + |x a_\eps| - |aa_\eps| \le \eps. 
\]
Since $\eps > 0$ was arbitrary and $x_\eps \in \Sig$, 
$x$ lies in the closure of $\Sig$.
\end{proof}


\section{Gromov--Hausdorff distance estimates}

In this section we prove the results stated in the introduction.
First we recall the definition of the Gromov--Hausdorff distance.
Let $(Z,d^Z)$ be a metric space. The usual \emph{Hausdorff distance} 
$d^Z_\H(X,Y)$ of two subsets $X,Y$ of $Z$ is the infimum of all $\rho > 0$ 
such that $X$ is contained in the (open) $\rho$-neighborhood of $Y$ and, 
vice versa, $Y$ lies in the $\rho$-neighborhood of $X$.  
More generally, if $X$ and $Y$ are two metric spaces, their 
\emph{Gromov--Hausdorff distance} $d_\GH(X,Y)$ is defined as the infimum of 
all $\rho > 0$ for which there exist a metric space $(Z,d^Z)$ and isometric 
copies $X',Y' \sub Z$ of $X$ and $Y$, respectively, such that 
$d^Z_H(X',Y') < \rho$. The distance is always finite if $X$ and $Y$ are 
bounded, and for general metric spaces $X_1,X_2,X_3$ the triangle inequality
$d_\GH(X_1,X_2) + d_\GH(X_2,X_3) \ge d_\GH(X_1,X_3)$ holds. Furthermore,
$d_\GH$ induces an honest metric on the set of isometry classes of compact 
metric spaces. 

The Gromov--Hausdorff distance of two metric spaces $X,Y$
may alternatively be characterized as follows.
A \emph{correspondence} $R$ between $X$ and $Y$ is a subset of $X \times Y$ 
such that the projections $\pi_X : X \times Y \to X$ and 
$\pi_Y : X \times Y \to Y$ are surjective when restricted to $R$. 
Then
\[ 
d_\GH(X,Y) = \frac{1}{2} \inf_R \dis(R), 
\]
where the infimum is taken over all correspondences $R$ between $X$ and $Y$
(see~\cite[Theorem 7.3.25]{BurBI}). In view of this characterization, 
the following two theorems are now easy consequences of the results in the 
previous section. 

\begin{Thm}\label{Thm:injective-GH}
Suppose that $X,Y$ are two injective metric spaces, $A \sub X$ is a set
that spans $X$, and $B \sub Y$ is a set that spans $Y$.
Then 
\[
d_\GH(X,Y) \le 2\,d_\GH(A,B).
\]
\end{Thm}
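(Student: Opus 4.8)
The plan is to reduce the Gromov--Hausdorff estimate to the extension result of Proposition~\ref{Prop:injective-ext} via the correspondence characterization of $d_\GH$. Let $\eps > 0$ and set $\rho \defl d_\GH(A,B) + \eps$. By the formula $d_\GH(A,B) = \tfrac12 \inf_R \dis(R)$, there exists a correspondence $R_0 \sub A \times B$ between $A$ and $B$ with $\dis(R_0) < 2\rho$, so that $\alpha \defl \dis(R_0)/2 < \rho < \infty$. I would then regard $R_0$ as a subset of $X \times Y$. Since $\pi_X(R_0) = A$ spans $X$ by hypothesis, Proposition~\ref{Prop:injective-ext} applies and yields an extension $R_0 \sub \bar R \sub X \times Y$ with $\dis(\bar R) = \dis(R_0)$ and such that $\pi_X(\bar R)$ is an $\alpha$-net in $X$.

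The next step is to symmetrize: I would apply the same extension argument with the roles of $X$ and $Y$ interchanged. Concretely, viewing $\bar R$ as a relation, I would swap coordinates and use that $\pi_Y(R_0) = B$ spans $Y$ to extend (again by Proposition~\ref{Prop:injective-ext}) to a relation $\bar R \sub \hat R \sub X \times Y$ with $\dis(\hat R) = \dis(\bar R) = \dis(R_0)$ and $\pi_Y(\hat R)$ an $\alpha$-net in $Y$. The point is that the first extension already made $\pi_X(\bar R)$ an $\alpha$-net in $X$, and applying the proposition on the $Y$-side does not destroy this: enlarging $\hat R \supset \bar R$ only enlarges the projection $\pi_X(\hat R) \supset \pi_X(\bar R)$, which therefore remains an $\alpha$-net. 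Thus after both steps I obtain a single relation $\hat R \sub X \times Y$ with $\dis(\hat R) = \dis(R_0) < 2\rho$ whose two projections $\pi_X(\hat R)$ and $\pi_Y(\hat R)$ are $\alpha$-nets in $X$ and $Y$ respectively.

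From such a relation I would construct a genuine correspondence between $X$ and $Y$ with controlled distortion. The relation $\hat R$ itself need not be a correspondence because its projections are only $\alpha$-nets, not all of $X$ and $Y$. To fix this, for each $x \in X$ I would pick $(x',y') \in \hat R$ with $|x x'| \le \alpha$ and add the pair $(x, y')$; symmetrically, for each $y \in Y$ I would pick $(x'',y'') \in \hat R$ with $|y y''| \le \alpha$ and add $(x'', y)$. The resulting relation $Q$ is left- and right-total, hence a correspondence, and a short triangle-inequality computation shows that each newly added pair increases the distance discrepancy by at most $2\alpha$ relative to pairs already in $\hat R$; comparing two arbitrary pairs of $Q$ one estimates $\dis(Q) \le \dis(\hat R) + 4\alpha < 2\rho + 4\alpha$. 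Since $\alpha < \rho$, this gives $\dis(Q) < 6\rho$, which is too weak.

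The main obstacle is precisely this final bookkeeping: a naive closing-up of the nets to a correspondence loses the sharp constant. To recover the factor $2$ exactly I would instead bound $\dis(Q)$ by comparing, for any two pairs $(x,y),(\tilde x,\tilde y) \in Q$, the quantity $\bigl| |x\tilde x| - |y\tilde y| \bigr|$ directly against the distortion of the underlying net pairs, absorbing the two net-errors of size $\alpha$ on each side so that the total added error is $2\alpha$ rather than $4\alpha$; this yields $\dis(Q) \le \dis(\hat R) + 2\alpha$. Recalling $\dis(\hat R) = 2\alpha < 2\rho$ gives $\dis(Q) < 4\rho$, hence $d_\GH(X,Y) \le \tfrac12 \dis(Q) < 2\rho = 2\,d_\GH(A,B) + 2\eps$, and letting $\eps \to 0$ completes the proof. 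The delicate point throughout is organizing the net-closure so that the error is charged only once per space, which is what produces the optimal factor $2$ promised by the theorem.
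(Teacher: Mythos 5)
Your argument is correct and follows the paper's proof essentially verbatim: extend the correspondence twice via Proposition~\ref{Prop:injective-ext} to obtain $\alpha$-nets in $X$ and in $Y$ with unchanged distortion, then close up to a genuine correspondence at the cost of an additional $2\alpha$, giving $d_\GH(X,Y)\le\frac12(2\alpha+2\alpha)=\dis(R_0)$. The ``naive $4\alpha$'' detour is spurious --- each added pair retains one coordinate of a pair already in $\hat R$, so comparing any two pairs incurs at most one error of size $\alpha$ per pair, i.e.\ $2\alpha$ in total --- and your final accounting is exactly the paper's.
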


\begin{proof}
Suppose that $R \sub A \times B$ is a correspondence between $A$ and $B$
with $\alpha \defl \dis(R)/2 < \infty$.
By Proposition~\ref{Prop:injective-ext}, there is an extension 
$R \sub R_1 \sub X \times Y$ such that $\pi_X(R_1)$ is an $\alpha$-net in $X$ 
and $\dis(R_1) = \dis(R)$, and there is a further extension 
$R_1 \sub R_2 \sub X \times Y$ such that $\pi_Y(R_2)$ is an $\alpha$-net in $Y$ 
and $\dis(R_2) = \dis(R_1)$. It is then easy to see how to extend $R_2$ 
to a correspondence $\bar R$ between $X$ and $Y$ so that 
$\dis(\bar R) \le \dis(R_2) + 2 \alpha = 2\dis(R)$. Hence,
\[
d_\GH(X,Y) \le \frac12 \dis(\bar R) \le \dis(R),
\] 
and taking the infimum over all correspondences $R$ between $A$ and $B$ 
with finite distortion we obtain the result. 
\end{proof}

\begin{Thm}\label{Thm:trees-GH}
Suppose that $X,Y$ are two metric trees, $A \sub X$ is a set
that spans~$X$, and $B \sub Y$ is a set that spans~$Y$. Then 
\[
d_\GH(X,Y) \le d_\GH(A,B).
\]
\end{Thm}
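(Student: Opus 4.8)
The plan is to mimic the proof of Theorem~\ref{Thm:injective-GH}, but to exploit Proposition~\ref{Prop:trees-ext} in place of Proposition~\ref{Prop:injective-ext} so as to avoid the factor two. The overall strategy is to start with an arbitrary correspondence $R \sub A \times B$ of finite distortion, extend it to a correspondence $\bar R$ between all of $X$ and all of $Y$ without increasing the distortion at all, and then conclude via the characterization $d_\GH(X,Y) = \tfrac12 \inf_R \dis(R)$.

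The first obstacle is that Proposition~\ref{Prop:trees-ext} requires the projection to \emph{strictly} span, whereas the hypothesis only gives that $A$ spans $X$ (and $B$ spans $Y$). This is exactly what Lemma~\ref{Lem:dense-subtree} is for: I would apply it to obtain a dense subtree $\Sig \sub X$ with $A \sub \Sig$ such that $A$ strictly spans $\Sig$, and likewise a dense subtree $\Theta \sub Y$ with $B \sub \Theta$ such that $B$ strictly spans $\Theta$. Now the correspondence $R$ lives in $A \times B \sub \Sig \times \Theta$, and $\pi_\Sig(R) = \pi_X(R)$ strictly spans $\Sig$. Applying Proposition~\ref{Prop:trees-ext} (with $\Sig$ as the metric tree and $Y$ as the injective space) yields an extension $R \sub R_1 \sub \Sig \times Y$ with $\pi_\Sig(R_1) = \Sig$ and $\dis(R_1) = \dis(R)$. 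A symmetric application then extends the projection to cover $\Theta$: viewing things from the $\Theta$ side, $\pi_\Theta(R_1)$ still strictly spans $\Theta$, so Proposition~\ref{Prop:trees-ext} gives $R_1 \sub R_2 \sub X \times \Theta$ with $\pi_\Theta(R_2) = \Theta$ and $\dis(R_2) = \dis(R_1) = \dis(R)$. Thus $R_2$ is a correspondence between $\Sig$ and $\Theta$ of the same distortion as $R$, which already shows $d_\GH(\Sig,\Theta) \le d_\GH(A,B)$.

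The second, and I expect the only genuinely delicate, step is passing from the dense subtrees $\Sig,\Theta$ back to the full spaces $X,Y$. Here I would invoke density: since $\Sig$ is dense in $X$ and $\Theta$ is dense in $Y$, their Gromov--Hausdorff distances to the ambient spaces vanish, i.e.\ $d_\GH(\Sig,X) = 0$ and $d_\GH(\Theta,Y) = 0$. Indeed a dense subset $S$ of a metric space $Z$ satisfies $d_\GH(S,Z) = 0$ because $S$ is a $\rho$-net in $Z$ for every $\rho > 0$, so the inclusion gives a correspondence of arbitrarily small distortion. Combining this with the triangle inequality for $d_\GH$,
\[
d_\GH(X,Y) \le d_\GH(X,\Sig) + d_\GH(\Sig,\Theta) + d_\GH(\Theta,Y)
= d_\GH(\Sig,\Theta) \le d_\GH(A,B),
\]
which is the desired estimate.

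The main thing to be careful about is the direction in which Proposition~\ref{Prop:trees-ext} is applied: the proposition extends a relation so that its \emph{$X$-side} projection becomes all of the tree, and it only requires the tree to sit on the $X$-side while the $Y$-side merely needs to be injective. So in the first application the roles are ($\Sig$ = tree, $Y$ = injective) and in the second they are ($\Theta$ = tree, $\Sig \sub X$ injective — valid since any metric tree, being complete here or at least embeddable, plays the injective role; more cleanly one keeps the target as the full injective space $Y$ and re-runs the argument symmetrically). I would phrase the symmetric second extension carefully to ensure both projections are genuinely surjective onto $\Sig$ and $\Theta$ respectively, so that $R_2$ is honestly a correspondence. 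Once that bookkeeping is settled, taking the infimum over all finite-distortion correspondences $R$ between $A$ and $B$ completes the proof with no factor two lost anywhere.
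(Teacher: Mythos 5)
Your overall strategy is the paper's: extend a correspondence $R \sub A \times B$ via Lemma~\ref{Lem:dense-subtree} and Proposition~\ref{Prop:trees-ext} to a distortion-preserving correspondence between dense subtrees of $X$ and $Y$, then pass to the closures via $d_\GH(\Sig,X)=d_\GH(\Theta,Y)=0$. However, two steps as written would fail. First, Proposition~\ref{Prop:trees-ext} requires the non-tree factor to be \emph{injective}. A metric tree is injective only if it is complete, and completeness is not among the hypotheses; your parenthetical ``being complete here or at least embeddable'' does not repair this, since the proof of Proposition~\ref{Prop:trees-ext} selects the points $y_0$ and $\bar y$ inside the target by hyperconvexity of the target itself, and a subspace of an injective space (such as the generally incomplete tree $\Sig$, or a non-complete $Y$) need not contain them. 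The paper disposes of this at the outset by replacing $X,Y$ with their completions, which changes neither $d_\GH(X,Y)$ nor the fact that $A,B$ are spanning sets; you need this reduction before either application of the proposition.

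Second, and more substantively, you fix the dense subtree $\Theta$ strictly spanned by $B$ \emph{before} performing the first extension. After Proposition~\ref{Prop:trees-ext} is applied on the $X$-side, the enlarged relation $R_1$ contains new pairs whose $Y$-coordinates are produced by hyperconvexity of $Y$ and have no reason to lie in $\Theta$. Hence $R_1 \not\sub X \times \Theta$, the expression $\pi_\Theta(R_1)$ is not defined, and Proposition~\ref{Prop:trees-ext} cannot be applied with $\Theta$ in the role of the tree: its hypothesis requires the relation to be contained in (tree) $\times$ (injective space) with the projection to the tree strictly spanning it. The repair is the paper's ordering: set $B' \defl \pi_Y(R_1) \supset B$, observe that $B'$ still spans $Y$, and only then invoke Lemma~\ref{Lem:dense-subtree} to obtain a dense subtree $Y' \supset B'$ strictly spanned by $B'$; now $R_1 \sub X \times Y'$ and the second extension goes through, with the completed $X$ serving as the injective factor. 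With these two corrections your argument coincides with the paper's proof; the concluding density and triangle-inequality step is fine.
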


\begin{proof}
Note that the completions $\bar X, \bar Y$ of $X,Y$ satisfy 
$d_\GH(\bar X,\bar Y) = d_\GH(X,Y)$, and $A,B$ span $\bar X,\bar Y$, 
respectively. We thus assume, without loss of generality, 
that the metric trees $X,Y$ are complete, hence injective.
Let $R \sub A \times B$ be a correspondence between $A$ and~$B$.
By Lemma~\ref{Lem:dense-subtree}, $A$ strictly spans a 
tree $X' \supset A$ that is dense in~$X$. Hence, by 
Proposition~\ref{Prop:trees-ext}, there is an extension 
$R \sub R_1 \sub X' \times Y$ such that $\pi_{X'}(R_1) = X'$
and $\dis(R_1) = \dis(R)$. We have $B \sub B' \defl \pi_Y(R_1)$, and
so $B'$ also spans~$Y$. Again, $B'$ strictly spans
a tree $Y' \supset B'$ that is dense in~$Y$, and there is an extension 
$R_1 \sub R_2 \sub X \times Y'$ such that $\pi_{Y'}(R_2) = Y'$
and $\dis(R_2) = \dis(R_1)$. Since $\pi_X(R_2) \supset X'$ is 
dense in~$X$, and $Y'$ is dense in~$Y$, we obtain that
\[
d_\GH(X,Y) = d_\GH(\pi_X(R_2),Y') \le \frac12 \dis(R_2) = \frac12 \dis(R). 
\] 
As this holds for all correspondences $R$ between $A$ and $B$, 
this gives the result.
\end{proof}

Next, in order to relate these results to the discussion in the introduction, 
we recall Isbell's explicit construction of the injective hull 
$\E X$ of a metric space~$X$. We denote by $\R^X$ the vector space of all real 
functions on~$X$. As a set, $\E X$ is defined as
\[
\E X \defl \bigl\{ f \in \R^X : 
\text{$f(x) = \textstyle\sup_{y \in X} (|xy| - f(y))$ for all $x \in X$} 
\bigr\},
\]
the set of the so-called {\em extremal functions} on $X$. For every $z \in X$,
the distance function $d_z$, defined by $d_z(x) \defl |xz|$ for $x \in X$,
belongs to~$\E X$. In general, for every $f \in \E X$ and $z \in X$, the 
inequalities 
\[   
d_z - f(z) \le f \le d_z + f(z)
\]
hold, and it follows that $\|f - d_z\|_\infty \defl \sup |f - d_z|
= f(z)$. In particular, $\|f - g\|_\infty$ is finite for every pair of 
functions $f,g \in \E X$, and this equips $\E X$ with a metric.
The map $\e \colon X \to \E X$ that takes $x$ to $d_x$ is then a canonical 
isometric embedding of $X$ into $\E X$, as $\|d_x - d_y\|_\infty = |xy|$ for 
all $x,y \in X$. Isbell proved that $(\e,\E X)$ is indeed an 
\emph{injective hull} of $X$, i.e., $\E X$ is an injective metric space, 
and $(\e,\E X)$ is a minimal such extension of $X$ in that no proper subspace 
of $\E X$ containing $\e(X)$ is injective. Furthermore, if $(i,Y)$ is 
another injective hull of $X$, then there exists a unique isometry
$I \colon \E X \to Y$ with the property that $I \circ \e = i$.
The following result explains how injective 
hulls are related to spanning subsets of (injective) metric spaces,
in the sense of this paper. 

\begin{Prop} \label{Prop:inj-hull-spanned}
\begin{enumerate}
\item 
For every metric space $A$, the image $\e(A)$ of the canonical 
isometric embedding $\e \colon A \to \E A$ spans~$\E A$.
\item 
If $X$ is an injective metric space and $A \sub X$ is a set that spans
$X$, then $X$ is isometric to $\E A$ via the map that 
sends $x \in X$ to the restricted distance function~$d_x|_A$. 
\end{enumerate}
\end{Prop}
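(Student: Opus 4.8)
The plan is to prove both parts of Proposition~\ref{Prop:inj-hull-spanned} directly from the definition of the extremal functions comprising $\E A$, together with the characterizing identity $\|f - d_z\|_\infty = f(z)$ recorded above.

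\medskip
\noindent\emph{Part (1).}
First I would translate the spanning condition into the language of $\E A$. For $f, g \in \E A$, the distance is $\|f - g\|_\infty$, and a point of $\e(A)$ is a distance function $d_a$. Thus I must show that for every pair $f, g \in \E A$,
\[
\|f - g\|_\infty = \sup_{a \in A} \bigl( \|f - d_a\|_\infty - \|g - d_a\|_\infty \bigr)
= \sup_{a \in A} \bigl( f(a) - g(a) \bigr),
\]
using $\|f - d_a\|_\infty = f(a)$. The inequality ``$\ge$'' is immediate since $f(a) - g(a) \le \sup|f-g| = \|f-g\|_\infty$ for each $a$. For the reverse inequality, I would exploit the defining extremality relation $g(x) = \sup_{y \in A}(|xy| - g(y))$. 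Given $\eps > 0$, pick $x_0$ with $f(x_0) - g(x_0)$ close to $\|f - g\|_\infty$ (or handle both signs of the sup defining $\|f-g\|_\infty$ by symmetry), then use extremality of $g$ at $x_0$ to produce $a \in A$ with $g(x_0) \le |x_0 a| - g(a) + \eps$, i.e.\ $g(a) \le |x_0 a| - g(x_0) + \eps$; combining with $f(a) \ge |x_0 a| - f(x_0)$ (the lower bound $f \ge d_{x_0} - f(x_0)$) should yield $f(a) - g(a) \ge f(x_0) - g(x_0) - \eps$.

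\medskip
\noindent\emph{Part (2).}
Here I would define $\Phi \colon X \to \E A$ by $\Phi(x) \defl d_x|_A$ and verify three things: that $d_x|_A$ is indeed extremal (lies in $\E A$), that $\Phi$ is an isometry onto its image, and that $\Phi$ is surjective. For extremality, the identity $d_x|_A(a) = |xa|$ and the spanning property $|xa| = \sup_{a' \in A}(|x a'| - |a a'|)$ are exactly the extremal-function condition, so $d_x|_A \in \E A$. For the isometry property, I expect $\|d_x|_A - d_{x'}|_A\|_\infty = \sup_{a \in A}\bigl||xa| - |x'a|\bigr| = |xx'|$, where ``$\le |xx'|$'' is the triangle inequality and ``$\ge$'' follows from the spanning hypothesis applied to the pair $(x,x')$.

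\medskip
The main obstacle I anticipate is surjectivity in Part (2): given an arbitrary $f \in \E A$, I must locate a point $x \in X$ with $d_x|_A = f$. The natural strategy is to use that $X$ is injective, hence hyperconvex. The family $\{(a, f(a))\}_{a \in A}$ satisfies $f(a) + f(a') \ge |aa'|$ (a standard consequence of extremality of $f$), so hyperconvexity yields a point $x \in X$ with $|xa| \le f(a)$ for all $a \in A$. It then remains to upgrade these inequalities to equalities $|xa| = f(a)$, which should follow from minimality/extremality: if some $|xa_0| < f(a_0)$ strictly, the spanning property of $A$ in $X$ together with the extremal relation for $f$ should force a contradiction. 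This interplay between hyperconvexity (giving existence of $x$) and the spanning condition (forcing sharpness) is the delicate point, and I would expect to invoke the full strength of ``$A$ spans $X$'' precisely to close the gap.
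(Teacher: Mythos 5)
Your proposal is correct, and most of it tracks the paper. Part~(1) and the isometric-embedding half of Part~(2) follow essentially the same route as the paper's proof. Two remarks on Part~(1): since the definition of ``spans'' is stated for the ordered pair $(f,g)$, the case that genuinely needs the extremality trick is the one where $\|f-g\|_\infty$ is nearly attained by $g-f$ at some $x_0\in A$ (if it is nearly attained by $f-g$ at $x_0$, then $a=x_0$ already works); note that your displayed chain actually yields $f(a)-g(a)\ge g(x_0)-f(x_0)-\eps$ rather than $f(x_0)-g(x_0)-\eps$, which is precisely what that nontrivial case requires --- so the mechanism is right and only the labelling of the cases is off. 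The genuine difference lies in the surjectivity step of Part~(2). The paper gets it for free from Isbell's minimality statement: the image of $x\mapsto d_x|_A$ is an injective subspace of $\E A$ containing $\e(A)$, and no proper such subspace is injective. You instead construct a preimage of a given $f\in\E A$ via hyperconvexity, and the step you flag as delicate closes more easily than you fear, with no use of the spanning hypothesis at all: having chosen $x$ with $|xa|\le f(a)$ for all $a\in A$, fix $a_0$ and use extremality of $f$ to pick $a_\eps$ with $|a_0a_\eps|-f(a_\eps)\ge f(a_0)-\eps$; then $|xa_0|\ge |a_0a_\eps|-|xa_\eps|\ge |a_0a_\eps|-f(a_\eps)\ge f(a_0)-\eps$, whence $|xa_0|=f(a_0)$. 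Your route is therefore a bit more self-contained --- it relies only on hyperconvexity, which the paper invokes elsewhere anyway, rather than on the minimality of the injective hull --- at the cost of being slightly longer than the paper's one-line appeal to Isbell's theorem.
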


\begin{proof}
For~(1), let a pair $(f,g)$ of elements of $\E A$ be given, and let 
$\eps > 0$. There exists either a point 
$b \in A$ such that $\|f - g\|_\infty \le f(b) - g(b) + \eps/2$ or a point 
$a \in A$ such that $\|f - g\|_\infty \le g(a) - f(a) + \eps/2$. Then, by the 
definition of $\E A$, we may choose $a \in A$ with 
$f(b) \le |ab| - f(a) + \eps/2$ in the first case and $b \in A$ with 
$g(a) \le |ab| - g(b) + \eps/2$ in the second. In either case, this gives
\[
\|f - g\|_\infty \le |ab| - f(a) - g(b) + \eps.
\]
Since $|ab| - f(a) \le f(b) = \|f - d_b\|_\infty$ and 
$g(b) = \|g - d_b\|_\infty$, we obtain that 
$\|f - g\|_\infty \le \|f - d_b\|_\infty - \|g - d_b\|_\infty + \eps$.
As $d_b = \e(b) \in \e(A)$, this shows the claim.

For the proof of~(2), let $x,y \in X$. Since $A$ spans $X$, we have first 
that for every $a \in A$, $d_x(a) = \sup_{b \in A}(|ab| - d_x(b))$, 
so $d_x|_A \in \E A$.
Secondly, $|xy| = \sup_{a \in A}(|ax| - |ay|)$, which implies that 
the inequality
\[
\bigl\| d_x|_A - d_y|_A \bigr\|_\infty 
= \sup_{a \in A}\bigl| |ax| - |ay| \bigr| \le |xy|
\]
is in fact an equality. Hence, the map that takes $x$ to $d_x|_A$ is an 
isometric embedding of $X$ into $\E A$. Since $X$ is injective, so is the 
image of this map. Because no proper subspace of $\E A$ containing 
$\e(A)$ is injective, the image agrees with $\E A$.
\end{proof}

In view of Proposition~\ref{Prop:inj-hull-spanned}, 
Theorem~\ref{Thm:injective-GH} is equivalent to saying that for any 
metric spaces $A$ and $B$,
\[
d_\GH(\E A, \E B) \le 2\,d_\GH(A,B),
\]
as stated in the introduction.
We now show that the factor two is optimal. 

\begin{Expl}\label{Expl:roughlyiso}
First we show that 
if $f \colon \R \times [0,4] \rightarrow \R$ is an $\eps$-roughly isometric 
map, where $\R \times [0,4] \sub \R^2$ is endowed with the $l_1$ metric, 
then $\eps \ge 4$. 
For any integer $n \ge 1$, consider the subset 
\[
Z_n \defl \bigl( \{0,8,\dots,8n\} \times \{0\} \bigr) \cup 
\bigl( \{4,12,\dots,8n-4\} \times \{4\} \bigr)
\]
of $\R \times [0,4]$ of cardinality $2n+1$. Note that, with respect to the 
$l_1$ distance, distinct points in $Z_n$ are at distance at least eight 
from each other, and the diameter of $Z_n$ equals~$8n$. 
Let $\{z_1,z_2,\dots,z_{2n+1}\}$ be an enumeration
of $Z_n$ so that $f(z_1) \le f(z_2) \le \dots \le f(z_{2n+1})$. We have 
$f(z_{i+1}) - f(z_i) \ge \|z_{i+1} - z_i\|_1 - \eps \ge 8 - \eps$,
hence taking the sum from $i = 1$ to $2n$ we obtain $f(z_{2n+1}) - f(z_1) \ge
2n(8 - \eps)$. On the other hand, $f(z_{2n+1}) - f(z_1) \le 
\diam(Z_n) + \eps = 8n + \eps$.
It follows that $\eps \ge 8n/(2n+1)$. 
This holds for any $n \ge 1$, thus $\eps \ge 4$. 

Now, for any $N > 0$, consider the two metric spaces 
$A = \{a_1,\dots,a_4\}$ and $B = \{b_1,\dots,b_4\}$, where 
$|a_1a_2| = |a_3a_4| = 4$, $|a_1a_3| = |a_2a_4| = N$, 
$|a_1a_4| = |a_2a_3| = N+4$, $|b_1b_2| = |b_3b_4| = 2$ and 
$|b_ib_j| = N+2$ ($i \ne j$) otherwise. The correspondence 
$\{(a_1,b_1),\dots,(a_4,b_4)\}$ has distortion~two, and since
$\diam(A) = \diam(B) + 2$ there is no correspondence 
between~$A$ and~$B$ with distortion less than~two. 
So $d_\GH(A,B) = 1$.
The injective hull $\E A$ is isometric to $[0,N] \times [0,4] \sub \R^2$ with
the $l_1$ distance, and $\E B$ is a metric tree with a central edge of 
length~$N$ and two edges of length~one attached at each of its endpoints
(like the tree $Y$ depicted in the introduction).
Let $\eps_0 < 4$ be given. If $N$ is chosen big enough, depending on $\eps_0$,
essentially the same argument as above shows that there is no 
$\eps$-roughly isometric map $f \colon \E A \to \E B$ with $\eps < \eps_0$. 
In particular, every correspondence between $\E A$ and $\E B$ has distortion 
at least $\eps_0/2$. In other words, for every $\del_0 < 2$ we find a pair of 
four-point metric spaces $A,B$ so that $\E A$ is two-dimensional, 
$\E B$ is a metric tree, $d_\GH(A,B) = 1$, and $d_\GH(\E A,\E B) \ge \del_0$.
\end{Expl}



\begin{thebibliography}{99}

\bibitem{AdaHS}
J. Adamek, H. Herrlich, G. E. Strecker,
Abstract and concrete categories: the joy of cats,
Reprint of the 1990 original [Wiley],
Repr. Theory Appl. Categ. No. 17 (2006), 1--507.

\bibitem{AroP}
N. Aronszajn, P. Panitchpakdi, 
Extension of uniformly continuous transformations and hyperconvex metric 
spaces, 
Pacific J. Math. 6 (1956), 405--439.

\bibitem{BurBI}
D. Burago, Y. Burago, S. Ivanov,
A Course in Metric Geometry,
AMS, 2001.

\bibitem{BuyS}
S. Buyalo, V. Schroeder, 
Elements of asymptotic geometry,
EMS Monographs in Mathematics, 2007.

\bibitem{Dre}
A. Dress, 
Trees, tight extensions of metric spaces, and the cohomological dimension 
of certain groups: a note on combinatorial properties of metric spaces, 
Adv. in Math. 53 (1984), 321--402.

\bibitem{DreHM}
A. Dress, K. T. Huber, V. Moulton, 
An explicit computation of the injective hull of certain finite metric 
spaces in terms of their associated Buneman complex,
Adv. in Math. 168 (2002), 1--28.

\bibitem{DreMT}
A. Dress, V. Moulton, W. Terhalle,
T-theory: an overview,
Europ. J. Combinatorics 17 (1996), 161--175.

\bibitem{Isb}
J. R. Isbell, 
Six theorems about injective metric spaces,
Comment. Math. Helv. 39 (1964), 65--76.

\bibitem{L}
U. Lang,
Injective hulls of certain discrete metric spaces and groups,
arXiv:1107.5971 [math.GR].

\bibitem{Moe}
A. Moezzi, 
The Injective Hull of Hyperbolic Groups, Dissertation ETH Zurich,
No.~18860, 2010.

\end{thebibliography}
\end{document}